%
\documentclass{au}
\usepackage{amssymb,latexsym}
\usepackage{amsmath}
\usepackage[dvipdfm]{graphicx}
 \usepackage{color}
\usepackage{enumerate}
\newenvironment{enumeratei}{\begin{enumerate}[\upshape (i)]}{\end{enumerate}}
\theoremstyle{plain}
 \newtheorem{theorem}{Theorem}
 
 \newtheorem{proposition}[theorem]{Proposition}
 \newtheorem{corollary}[theorem]{Corollary}
\theoremstyle{definition}
 
 \newtheorem{remark}[theorem]{Remark}
%
%
%
\newcommand \psetn {\textup{Pow}(\set{1,\dots,n})}
\newcommand \alg [1] {\mathfrak{#1}}
\newcommand \tbf [1] {\textbf{#1}}     
\newcommand \width [1] {\textup{width}\,#1}
\newcommand \ejlat [1] { L_{\textup{EJ}}(#1 )}
\newcommand \czlat [1] { L_{\textup{C}}(#1 )}
\renewcommand \emptyset{\varnothing}
\newcommand \upstar [1] {#1^{\ast}}
\newcommand \print [1] {\textup{PrInt}(#1)}
\newcommand \id {\textup{id}}
\newcommand \vpi {\vec \pi}
\newcommand \vsigma {\vec \sigma}
\newcommand \tuple [1] {\langle #1 \rangle}
\newcommand \kset {\set{1,\ldots,k}}

\newcommand \Jir [1] {\textup{Ji}\,#1} 
\newcommand \Mir [1] {\textup{Mi}\,#1}
\newcommand \length [1] {\textup{length}\,#1}
\newcommand \set[1] {\{#1\}}
\newcommand \bigset[1] {\bigl\{#1\bigr\}}
\renewcommand\phi{\varphi}

\newcommand\nothing [1] {}

%
%
%
%
%
\begin{document}
\title[Join-distributive lattices by permutations]
{Notes on the description of join-distributive lattices by   permutations}
\author[K.\ Adaricheva]{Kira Adaricheva}
\email{adariche@yu.edu}
\address{Department of Mathematical Sciences, Yeshiva University, New York,  245 Lexington ave., New York, NY 10016,  USA}

\author[G.\ Cz\'edli]{G\'abor Cz\'edli}
\email{czedli@math.u-szeged.hu}
\urladdr{http://www.math.u-szeged.hu/~czedli/}
\address{University of Szeged, Bolyai Institute. 
Szeged, Aradi v\'ertan\'uk tere 1, HUNGARY 6720}

\thanks{This research was supported by the NFSR of Hungary (OTKA), grant numbers  K77432 and
K83219, and by  T\'AMOP-4.2.1/B-09/1/KONV-2010-0005}


\subjclass[2010]{Primary 06C10; secondary 05E99} 
\nothing{
05E99 Algebraic combinatorics (1991-now) None of the above, but in this section 
;
52C99  Discrete Geometry  (1991-now) None of the above, but in this section} 

\keywords{Join-distributive lattice,  semimodular lattice, diamond-free lattice, trajectory, permutation}

\begin{abstract} Let $L$ be a join-distributive lattice with length $n$ and $\width{(\Jir L)}\leq k$. 
There are two ways to describe $L$ 
by $k-1$ permutations acting on an $n$-element set: a combinatorial way given by P.\,H.~Edelman and  R.\,E.~Jamison  in 1985 and a recent lattice theoretical way of the second author. We  prove that these two approaches are equivalent. Also, we characterize  join-distributive lattices by trajectories.
\end{abstract}

\maketitle

\subsection*{Introduction}
For $x\neq 1$ in a finite lattice $L$, let $\upstar x$ denote the join of upper covers of $x$. A finite lattice $L$ is \emph{join-distributive} if 
the interval $[x,\upstar x]$ is distributive for all $x\in L\setminus\set 1$. For other definitions,  see  K.~Adaricheva~\cite{adaricheva}, 
 K.~Adaricheva, V.A.~Gorbunov and V.I.~Tumanov~\cite{r:adarichevaetal}, and 
N.~Caspard and B.~Monjardet~\cite{caspardmonjardet}, see  G.~Cz\'edli~\cite[Proposition 2.1 and Remark 2.2]{czgcoord} for a recent survey, and see \eqref{Dlwltcsd} before the proof of Corollary~\ref{trajcorol} later for a particularly  useful variant.
The study of (the duals of) join-distributive lattices  goes back to R.\,P.~Dilworth~\cite{r:dilworth40}, 1940. There were  a lot of discoveries and rediscoveries of these lattices and equivalent combinatorial structures; see   \cite{r:adarichevaetal}, \cite{czgcoord}, B.~ Monjardet~\cite{monjardet}, and M.~Stern~\cite{stern} for surveys. 
Note that join-distributivity implies semimodularity; the origin of this result is the combination of M. Ward~\cite{ward} (see also R.\,P.~Dilworth~\cite[page 771]{r:dilworth40}, where \cite{ward} is cited) and S.\,P.~Avann~\cite{avann} (see also 
P.\,H.~Edelman~\cite[Theorem 1.1(E,H)]{edelmanproc}, when \cite{avann} is recalled).

The \emph{join-width} of $L$, denoted by $\width{(\Jir L)}$, is the largest $k$ such that there is a $k$-element antichain of join-irreducible elements of $L$. As usual, $S_n$ stands for the set of permutations acting on the set $\set{1,\ldots,n}$. 
There are two known ways to describe a join-distributive lattice with join-width $k$ and length $n$ by $k-1$ permutations; our goal is to enlighten their connection. This connection exemplifies that Lattice Theory can be applied in Combinatorics and vice versa. 
 We also give a new characterization of join-distributive lattices.

\begin{figure}
\includegraphics[scale=1.0]{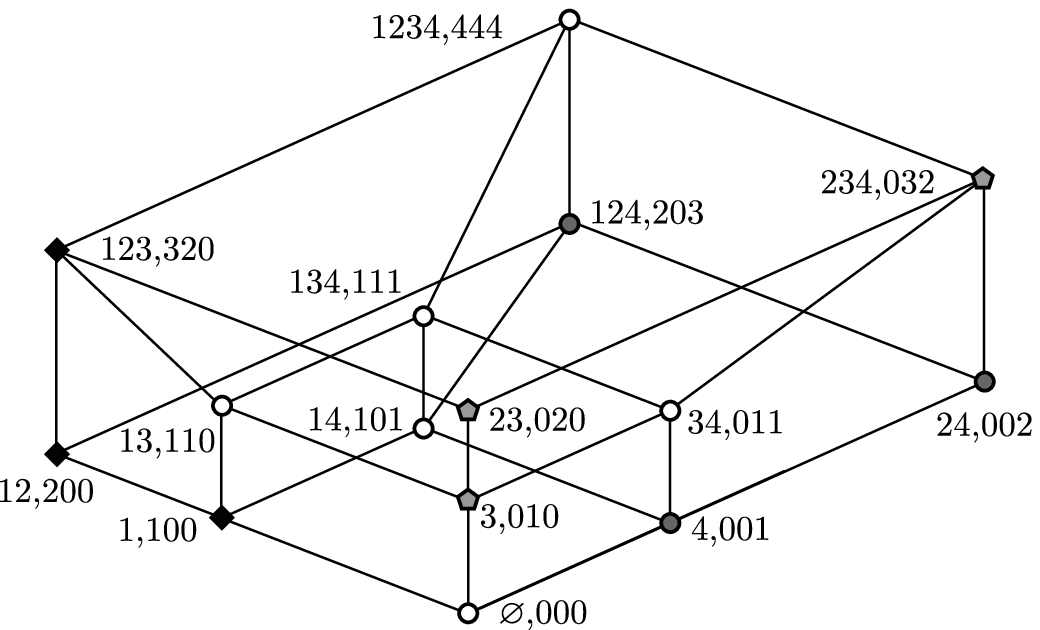}
\caption{An example of $\ejlat{\vsigma}$ and $\czlat\vpi$} 
\label{figone}
\end{figure}

%

\subsection*{Two constructions} For $n\in\mathbb N=\set{1,2,\dots}$ and $k\in\set{2,3,\dots}$, let $\vsigma=\tuple{\sigma_2,\dots,\sigma_k}\in S_n^{k-1}$. For convenience, $\sigma_1\in S_n$ will denote the identity permutation. 
In the powerset join-semilattice 
$\tuple{P(\set{1,\ldots,n});\cup}$, consider the subsemilattice $\ejlat{\vsigma}$ generated by
\begin{equation}\label{gensetEJ}
\bigset{\set{\sigma_i(1),\dots,\sigma_i(j)}: i\in\set{1,\ldots,k}, \,\, j\in\set{0,\dots,n} }\text.
\end{equation}
Since it contains $\emptyset$, $\ejlat{\vsigma}$ is a lattice, the \emph{Edelman-Jamison lattice} determined by $\vsigma$. Its definition above is a straightforward translation from the combinatorial language of P.\,H.~Edelman and R.\,E.~Jamison~\cite[Theorem 5.2]{edeljam} to  Lattice Theory.   Actually, the original version in \cite{edeljam} describes a decomposition of convex geometries.

To present an example, let 
$\sigma_2=\begin{pmatrix}1&2&3&4\cr 3&2&4&1 \end{pmatrix}$ and $\sigma_3=\begin{pmatrix}1&2&3&4\cr 4&2&1&3 \end{pmatrix}$. Then $\vsigma=\tuple{\sigma_2,\sigma_3}\in S_4^2$, and $\ejlat\vsigma$ is depicted in Figure~\ref{figone}. 
In the label of an element in $\ejlat\vsigma$, 
only the part before the comma is relevant;  to save space, subsets are denoted by listing their elements without commas. For example, $134,\!111$ in the figure stands for the subset $\set{1,3,4}$ of $\set{1,2,3,4}$. The chain defined in \eqref{gensetEJ}, apart from its top $\set{1,2,3,4}$ and bottom $\emptyset$,  corresponds to the black-filled small squares for $i=1$, the light grey-filled pentagons for $i=2$, and the dark grey-filled circles for $i=3$. Note that $\ejlat\vsigma$ consists of all subsets of $\set{1,2,3,4}$ but $\set 2$.

Next, we recall a related construction from G.~Cz\'edli~\cite{czgcoord}.  Given $\vpi=\tuple{\pi_{12},\dots, \pi_{1k}}\in S_n^k$, we let $\pi_{ij}=\pi_{1j}\circ\pi_{1i}^{-1}$ for $i,j\in\set{1,\ldots,k}$.
Here we compose permutations from right to left, that is, $(\pi_{1j}\circ\pi_{1i}^{-1})(x) =  \pi_{1j}(\pi_{1i}^{-1}(x))$.
Note that  $\pi_{ii}=\id$, $\pi_{ij}=\pi_{ji}^{-1}$, and $\pi_{jt}\circ\pi_{ij}=\pi_{it}$ hold for all $i,j,t\in\set{1,\dots,k}$.
By an \emph{eligible $\vpi$-tuple} we mean a $k$-tuple $\vec x=\langle x_1,\ldots, x_k\rangle \in\set{0,1,\ldots,n}^k$ such that $\pi_{ij}(x_i+1 )\geq x_j+1$ holds for all $i,j\in \kset $ such that $x_i<n$. Note that an eligible $\vpi$-tuple belongs to 
$\set{0,1,\dots, n-1}^k\cup\set{\tuple{n,\dots,n}}$ since $x_j=n$ implies $x_i=n$.
The set of eligible $\vpi$-tuples is denoted by $\czlat{\vpi}$. 
It is a poset with respect to the componentwise order: $\vec x\leq \vec y$ means that $x_i\leq y_i$ for all $i\in \kset $. It is trivial to check that $\tuple{n,\dots,n}\in\czlat{\vpi}$ and that 
$\czlat{\vpi}$ is a meet-subsemilattice of the $k$-th direct power of the chain $\set{0\prec 1\prec\dots\prec n}$. Therefore, $\czlat{\vpi}$ is a lattice, the \emph{$\vpi$-coordinatized lattice}.
Its construction is motivated by 
G.~Cz\'edli and E.\,T.~Schmidt~\cite[Theorem 1]{czgschthowto}, see also M.~Stern~\cite{stern}, which asserts that there is a surjective cover-preserving join-homomorphism $\phi\colon\set{0\prec\cdots \prec n}^k\to L$, provided $L$ is semimodular. Then, as it is easy to verify, $u\mapsto \bigvee\set{x: \phi(x)=u}$ is a meet-embedding of $L$ into $\set{0\prec\cdots \prec n}^k$.

To give an example, let 
$\pi_{12}=\begin{pmatrix}1&2&3&4\cr 4&2&1&3 \end{pmatrix}$,  $\pi_{13}=\begin{pmatrix}1&2&3&4\cr 3&2&4&1 \end{pmatrix}$, and let  $\vpi=\tuple{\pi_{12},\pi_{13}}\in S_4^2$. 
Then  Figure~\ref{figone} also gives $\czlat\vpi$; the eligible $\vpi$-tuples are given after the commas in the labels. For example, $23,\!020$ in the figure corresponds to $\tuple{ 0,2,0}$.  Note that if $\mu_{12}=\begin{pmatrix}1&2&3&4\cr 3&2&1&4 \end{pmatrix}$ and  $\mu_{13}=\pi_{13}$, then 
$\czlat\vpi\cong \czlat{\vec \mu}$. 
Furthermore, the  problem of characterizing those pairs of members of $S_n^k$ that determine the same lattice is not solved yet if $k\geq 3$. For $k=2$ the solution is given in G.~Cz\'edli and E.\,T.~Schmidt~\cite{czgschperm}; besides $\czlat\vpi\cong \czlat{\vec \mu}$ above, see  also G.~Cz\'edli~\cite[Example 5.3]{czgcoord} to see the difficulty.

The connection of join-distributivity to $\ejlat{\vpi}$  and $\czlat{\vpi}$ will be given soon.

\subsection*{The two constructions are equivalent} For $\tuple{\gamma_2,\dots,\gamma_k}\in S_n^{k-1}$, we let   $\tuple{\gamma_2,\dots,\gamma_k}^{-1}=\tuple{\gamma_2^{-1},\dots,\gamma_k^{-1}}$.

\begin{proposition}\label{ourprop} For every $\vsigma\in S_n^{k-1}$, $\ejlat{\vsigma}$ is isomorphic to $\czlat{\vsigma^{-1}}$.
\end{proposition}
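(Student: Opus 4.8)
The plan is to construct an explicit order isomorphism $\Phi\colon\ejlat{\vsigma}\to\czlat{\vsigma^{-1}}$ and to check that its range is exactly $\czlat{\vsigma^{-1}}$; since both posets are lattices, an order isomorphism automatically preserves joins and meets, so this will suffice. Abbreviate $G_{i,j}=\set{\sigma_i(1),\dots,\sigma_i(j)}$ for $i\in\kset$ and $j\in\set{0,\dots,n}$, so that the generators in~\eqref{gensetEJ} are precisely the sets $G_{i,j}$, and for each fixed $i$ they form a chain $\emptyset=G_{i,0}\subset G_{i,1}\subset\cdots\subset G_{i,n}=\set{1,\dots,n}$. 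For $X\in\ejlat{\vsigma}$ I would set $\Phi(X)=\tuple{x_1,\dots,x_k}$ with $x_i=\max\set{j:G_{i,j}\subseteq X}$ (the set is nonempty, as it contains $0$).

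First I would record a canonical-form statement: every $X\in\ejlat{\vsigma}$ equals $\bigcup_{i=1}^k G_{i,x_i}$ where $\tuple{x_1,\dots,x_k}=\Phi(X)$. Indeed, a member of the join-subsemilattice generated by the $G_{i,j}$ is a finite union of generators, and since the generators with a fixed first index are nested, it can be rewritten as $\bigcup_i G_{i,j_i}$; each $G_{i,j_i}\subseteq X$ forces $j_i\le x_i$, whence $X=\bigcup_i G_{i,j_i}\subseteq\bigcup_i G_{i,x_i}\subseteq X$. Consequently $\Phi$ is injective, and it is clearly order preserving; conversely $\Phi(X)\le\Phi(Y)$ gives $G_{i,x_i}\subseteq G_{i,y_i}$ for every $i$, hence $X=\bigcup_i G_{i,x_i}\subseteq\bigcup_i G_{i,y_i}=Y$, so $\Phi$ is an order embedding.

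The core of the argument is to identify the range of $\Phi$ with $\czlat{\vsigma^{-1}}$, and here one simply unwinds the eligibility condition. For $\vpi=\vsigma^{-1}$ we have $\pi_{11}=\id$ and $\pi_{1j}=\sigma_j^{-1}$ for $2\le j\le k$, hence $\pi_{ij}=\pi_{1j}\circ\pi_{1i}^{-1}=\sigma_j^{-1}\circ\sigma_i$ for all $i,j\in\kset$. Fix $\vec x\in\set{0,\dots,n}^k$ and an index $i$ with $x_i<n$, and put $a=\sigma_i(x_i+1)$. Then $\pi_{ij}(x_i+1)\ge x_j+1$ reads $\sigma_j^{-1}(a)>x_j$, i.e.\ $a\notin\set{\sigma_j(1),\dots,\sigma_j(x_j)}=G_{j,x_j}$. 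Thus $\vec x$ is eligible if and only if $\sigma_i(x_i+1)\notin\bigcup_{j=1}^k G_{j,x_j}$ for every $i$ with $x_i<n$.

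Finally, set $X=\bigcup_{j=1}^k G_{j,x_j}\in\ejlat{\vsigma}$. Since $G_{i,x_i}\subseteq X$ and $G_{i,x_i+1}=G_{i,x_i}\cup\set{\sigma_i(x_i+1)}$, the condition ``$\sigma_i(x_i+1)\notin X$'' is equivalent to ``$G_{i,x_i+1}\not\subseteq X$'', that is, to ``$x_i=\max\set{j:G_{i,j}\subseteq X}$''; and for $x_i=n$ this maximality holds trivially while no eligibility constraint is imposed. Hence $\vec x$ is eligible exactly when $\vec x=\Phi(X)$ with this $X$, so $\czlat{\vsigma^{-1}}=\set{\Phi(X):X\in\ejlat{\vsigma}}$. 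Together with the previous paragraphs this shows that $\Phi$ is an order isomorphism onto $\czlat{\vsigma^{-1}}$, proving the proposition. I expect the only points requiring care to be the bookkeeping with inverse permutations and the composition order in passing from $\vsigma$ to $\vpi=\vsigma^{-1}$ (the worked example around Figure~\ref{figone} is a good sanity check), and the boundary case $x_i=n$ noted above.
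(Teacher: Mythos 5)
Your proof is correct and follows essentially the same route as the paper: the same coordinate map $X\mapsto\tuple{x_1,\dots,x_k}$ with $x_i=\max\set{j:G_{i,j}\subseteq X}$, the same canonical form $X=\bigcup_i G_{i,x_i}$, and the same unwinding of eligibility via $\pi_{ij}=\sigma_j^{-1}\circ\sigma_i$. Packaging the bijectivity as an identification of the range of $\Phi$ with the eligible tuples, rather than as separate injectivity and surjectivity arguments, is only a cosmetic difference.
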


In some vague sense, Figure~\ref{figone} reveals  why $\ejlat{\vsigma}$ could be of the form $\czlat{\vpi}$ for some $\vpi$. Namely,  for $x\in \ejlat\vsigma$ and $i\in \set{1,\dots, k}$, we can define the $i$-th coordinate of $x$ as the length of the intersection of the ideal $\set{y\in \ejlat\vsigma: y\leq x  }$ and the chain given in \eqref{gensetEJ}. However, the proof is more complex than this initial idea.

\begin{proof} Denote $\vsigma^{-1}$ by 
$\vpi=\tuple{\pi_{12},\dots,\pi_{1k}}$. 
Note that $\pi_{11}=\sigma_1^{-1}=\id\in S_n$. 
For $U\in \ejlat{\vsigma}$ and $i\in\set{1,\dots,k}$, let $U(i)=\max\bigset{j: \set{\sigma_i(1),\dots,\sigma_i(j)}\subseteq U}$, where $\max\emptyset$ is defined to be $0$.
We assert that the map
\[
\text{$\phi\colon \ejlat{\vsigma}\to   \czlat{\vpi}$, defined by  $U\mapsto\tuple{U(1),\dots,U(k)}$},
\]
is a lattice isomorphism. To prove that $\phi(U)$ is an eligible $\vpi$-tuple, assume that $i,j\in\set{1,\ldots, k}$ such that $U(i)<n$. Then $\sigma_i(U(i)+1)\notin U$ yields $\sigma_i(U(i)+1)\notin\set{\sigma_j(1),\dots,\sigma_j(U(j))}$. However, $\sigma_i(U(i)+1)\in\set{1,\dots,n}=\set{\sigma_j(1),\dots,\sigma_j(n)}$, and we conclude that $\sigma_i(U(i)+1)=\sigma_j(t)$ holds for some   $t\in\set{U(j)+1,\dots, n}$.
Hence 
\begin{align*}
\pi_{ij}(U(i)+1)&= (\pi_{1j}\circ\pi_{i1})(U(i)+1) =  \pi_{1j}\bigl(\pi_{i1}(U(i)+1)\bigr)\cr
&= \pi_{1j}\bigl(\pi_{1i}^{-1}(U(i)+1)\bigr) = \sigma_{j}^{-1}\bigl(\sigma_i(U(i)+1)\bigr) \cr
& = \sigma_{j}^{-1}\bigl(\sigma_j(t)\bigr) = t \geq U(j)+1\text. 
\end{align*}
This proves that $\phi(U)$ is an eligible $\vpi$-tuple, and $\phi$ is a map from $\ejlat{\vsigma}$ to $\czlat{\vpi}$.
Since $\ejlat{\vsigma}$ is generated by the set given in \eqref{gensetEJ}, we conclude 
\[
U=\bigcup_{i=1}^k \set{\sigma_i(1),\dots,\sigma_i(U(i))}\text{.}
\]
This implies that $U$ is determined by $\tuple{U(1),\dots, U(k)}=\phi(U)$, that is, $\phi$ is injective. To prove that $\phi$ is surjective, let $\vec x=\tuple{x_1,\dots,x_k}$ be a $\vpi$-eligible tuple, that is, $\vec x\in \czlat{\vpi}$.
Define
\begin{equation}\label{invfi} V=\bigcup_{i=1}^k\set{\sigma_i(1),\dots,\sigma_i(x_i)}\text.
\end{equation}
(Note that if $x_i=0$, then $\set{\sigma_i(1),\dots,\sigma_i(x_i)}$ denotes the empty set.) For the sake of contradiction, suppose $\phi(V)\neq  \vec x$.  Then, by the definition of $\phi$, there exists an $i\in\set{1,\dots,k}$ such that $\sigma_i(x_i+1)\in V$. Hence,  there is a $j\in\set{1,\dots,k}$ such that 
$\sigma_i(x_i+1)\in \set{\sigma_j(1),\dots,\sigma_j(x_j)}$. That is, $\sigma_i(x_i+1)=\sigma_j(t)$ for some $t\in\set{1,\dots,x_j}$. Therefore,
\begin{align*}\pi_{ij}(x_i+1)&=\pi_{1j}(\pi_{i1}(x_i+1))=\pi_{1j}(\pi_{1i}^{-1}(x_i+1)
=\sigma_{j}^{-1}(\sigma_i (x_i+1)) \cr
&=\sigma_{j}^{-1}(\sigma_j(t))=t\leq x_j,
\end{align*}
which contradicts the $\vpi$-eligibility of $\vec x$. Thus $\phi(V)=\vec x$ and $\phi$ is surjective. 

We have shown that $\phi$ is bijective. For $\vec x\in \czlat\vpi$, $\phi^{-1}(\vec x)$ is the set $V$ given in \eqref{invfi}. Thus $\phi$ and $\phi^{-1}$ are monotone, and $\phi$ is a lattice isomorphism.
\end{proof}

\subsection*{Two descriptions}
The following theorem is a straightforward consequence of Theorems 5.1 and 5.2 in P.\,H.~Edelman and R.\,E.~Jamison~\cite{edeljam}, which were formulated and  proved within Combinatorics.

\begin{theorem}\label{thmEJ} Up to isomorphism, join-distributive lattices of length $n$ and join-width at most $k$ are characterized as lattices $\ejlat{\vsigma}$ with $\vsigma\in S_n^{k-1}$.
\end{theorem}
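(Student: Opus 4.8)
The plan is to deduce Theorem~\ref{thmEJ} by combining Proposition~\ref{ourprop} with the lattice-theoretical characterization of join-distributive lattices via $\czlat{\vpi}$ that is available in G.~Cz\'edli~\cite{czgcoord}, and then reconciling the parameters ``length'' and ``join-width'' on both sides. First I would record the two implications separately. For the easy direction, given $\vsigma\in S_n^{k-1}$, Proposition~\ref{ourprop} gives $\ejlat{\vsigma}\cong\czlat{\vsigma^{-1}}$ with $\vsigma^{-1}\in S_n^{k-1}$; by the cited results from \cite{czgcoord}, every lattice of the form $\czlat{\vpi}$ with $\vpi\in S_n^k$ (equivalently $\tuple{\pi_{12},\dots,\pi_{1k}}\in S_n^{k-1}$, since $\pi_{11}=\id$) is join-distributive, its length is $n$, and its join-width is at most $k$. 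Hence every $\ejlat{\vsigma}$ is a join-distributive lattice of length $n$ and join-width $\le k$. For the converse, let $L$ be join-distributive with $\length L=n$ and $\width{(\Jir L)}\le k$. By \cite{czgcoord}, $L\cong\czlat{\vpi}$ for some $\vpi=\tuple{\pi_{12},\dots,\pi_{1k}}\in S_n^{k}$; setting $\vsigma=\vpi^{-1}=\tuple{\pi_{12}^{-1},\dots,\pi_{1k}^{-1}}\in S_n^{k-1}$ and applying Proposition~\ref{ourprop} in the form $\czlat{\vpi}=\czlat{(\vsigma^{-1})}\cong\ejlat{\vsigma}$ yields $L\cong\ejlat{\vsigma}$ with $\vsigma\in S_n^{k-1}$, as required.

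The steps, in order, are: (i) cite from \cite{czgcoord} that $\czlat{\vpi}$ is always join-distributive of length $n$ and join-width at most $k$; (ii) cite from \cite{czgcoord} that, conversely, every join-distributive lattice of length $n$ and join-width $\le k$ is isomorphic to some $\czlat{\vpi}$ with $\vpi\in S_n^k$; (iii) observe that $\vpi\mapsto\vpi^{-1}$ is a bijection between the relevant parameter sets and that it intertwines the $\czlat{\phantom x}$ and $\ejlat{\phantom x}$ constructions via Proposition~\ref{ourprop}; (iv) assemble (i)--(iii) into the two directions of the stated equivalence. Since the theorem is explicitly flagged as ``a straightforward consequence of Theorems 5.1 and 5.2 in \cite{edeljam}'', an alternative and essentially equivalent route is to quote those two theorems directly in lattice-theoretical translation: Theorem 5.1 of \cite{edeljam} gives that every convex geometry (hence, dually, every join-distributive lattice) on an appropriately sized base set arises from such a tuple of permutations, and Theorem 5.2 gives the converse; one then only has to check that the size parameters in \cite{edeljam} match $n$ and $k$ here. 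Using Proposition~\ref{ourprop} as the bridge, the two routes are the same theorem viewed from the two sides of the equivalence established in the previous subsection.

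The main obstacle is bookkeeping rather than mathematics: matching the numerical invariants. On the Edelman--Jamison side one must verify that a $(k-1)$-tuple of generating chains in $\psetn$ produces a lattice whose length is exactly $n$ (the common length of each generating maximal chain, which survives because unions of the $\set{\sigma_i(1),\dots,\sigma_i(j)}$ grow one element at a time and the whole lattice is graded by cardinality) and whose join-width is at most $k$ (no antichain of join-irreducibles can have more than $k$ members because each join-irreducible ``lives on'' one of the $k$ chains, a fact that is implicit in \eqref{gensetEJ} and made precise in \cite{edeljam}); dually, one must know that a join-distributive $L$ with $\width{(\Jir L)}\le k$ needs no more than $k$ chains, which is exactly the content of the Edelman--Jamison decomposition theorem. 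Because the previous subsection already transports everything to the $\czlat{\vpi}$ side, where the correspondence ``$k$ permutations $\leftrightarrow$ length $n$, join-width $\le k$'' is recorded in \cite{czgcoord}, these verifications can be quoted rather than redone, and the proof reduces to the short synthesis sketched above.
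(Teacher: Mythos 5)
Your proposal is correct and coincides with what the paper itself does: the paper's official justification of Theorem~\ref{thmEJ} is the direct citation of Theorems 5.1 and 5.2 of Edelman--Jamison \cite{edeljam} (your ``alternative route''), and your primary argument --- combining Proposition~\ref{ourprop} with the $\czlat{\vpi}$-characterization from \cite{czgcoord} --- is exactly the ``new, lattice theoretical proof of Theorem~\ref{thmEJ}'' that the paper records in the remark following Theorem~\ref{thmCz}. The inverse bookkeeping $\vsigma=\vpi^{-1}$ and the parameter matching are handled correctly, so there is nothing to add.
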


The next theorem was motivated and proved by the second author~\cite{czgcoord} in a purely lattice theoretical way. 

\begin{theorem}\label{thmCz} Up to isomorphism, join-distributive lattices of length $n$ and join-width at most $k$ are characterized as the $\vpi$-coordinatized lattices $\czlat{\vpi}$ with $\vpi\in S_n^{k-1}$.
\end{theorem}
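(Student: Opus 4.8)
The plan is to obtain Theorem~\ref{thmCz} as an immediate consequence of Theorem~\ref{thmEJ} and Proposition~\ref{ourprop}, rather than reproving it from scratch as in \cite{czgcoord}. The point is that componentwise inversion $\tuple{\gamma_2,\dots,\gamma_k}\mapsto\tuple{\gamma_2,\dots,\gamma_k}^{-1}$ maps $S_n^{k-1}$ bijectively onto itself and is an involution; so the family $\set{\ejlat{\vsigma}:\vsigma\in S_n^{k-1}}$ and the family $\set{\czlat{\vpi}:\vpi\in S_n^{k-1}}$ coincide up to isomorphism by Proposition~\ref{ourprop}, and both then coincide (up to isomorphism) with the class described in Theorem~\ref{thmEJ}.

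In detail, first I would show that every join-distributive lattice $L$ of length $n$ with $\width{(\Jir L)}\leq k$ is isomorphic to $\czlat{\vpi}$ for a suitable $\vpi\in S_n^{k-1}$. By Theorem~\ref{thmEJ} there is a $\vsigma\in S_n^{k-1}$ with $L\cong\ejlat{\vsigma}$, and Proposition~\ref{ourprop} gives $\ejlat{\vsigma}\cong\czlat{\vsigma^{-1}}$. Since $\vsigma^{-1}\in S_n^{k-1}$, the choice $\vpi:=\vsigma^{-1}$ works.

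Next I would prove the converse: for every $\vpi\in S_n^{k-1}$, the lattice $\czlat{\vpi}$ is join-distributive, has length $n$, and has join-width at most $k$. Set $\vsigma:=\vpi^{-1}\in S_n^{k-1}$; since inversion acts componentwise and is an involution on $S_n$, we get $\vsigma^{-1}=\vpi$, whence $\czlat{\vpi}=\czlat{\vsigma^{-1}}\cong\ejlat{\vsigma}$ by Proposition~\ref{ourprop}. By Theorem~\ref{thmEJ}, the lattice $\ejlat{\vsigma}$ lies in the class of join-distributive lattices of length $n$ and join-width at most $k$, and these are isomorphism-invariant properties, so $\czlat{\vpi}$ lies there too. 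Combining the two directions yields the claim.

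As for obstacles: with Theorem~\ref{thmEJ} and Proposition~\ref{ourprop} in hand there is essentially nothing left to do, since all the substance has been packed into those two statements. If one wanted a self-contained lattice-theoretic proof instead (as in \cite{czgcoord}), the work would be (a) to check directly that each $\czlat{\vpi}$ is join-distributive --- describing its covers inside $\set{0\prec\dots\prec n}^k$, verifying that its length equals $n$, that its join-irreducibles form antichains of size at most $k$, and that every interval $[x,\upstar x]$ is distributive --- and (b) to coordinatize an arbitrary such $L$, for instance via the cover-preserving join-homomorphism $\set{0\prec\dots\prec n}^k\to L$ of G.~Cz\'edli and E.\,T.~Schmidt~\cite{czgschthowto} recalled above, extracting the permutations $\pi_{1j}$ from the way the generating chains sit inside $L$. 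The delicate point there is (a): translating distributivity of the intervals $[x,\upstar x]$ into the eligibility inequalities $\pi_{ij}(x_i+1)\geq x_j+1$.
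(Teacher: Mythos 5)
Your proposal is correct and is essentially the argument the paper itself gives: Theorem~\ref{thmCz} is obtained from Theorem~\ref{thmEJ} via Proposition~\ref{ourprop} and the fact that componentwise inversion is an involution on $S_n^{k-1}$, exactly as stated in the Remark following the two theorems. The only difference is presentational: you spell out both directions of the deduction, which the paper leaves as ``obvious.''
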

\begin{remark}\label{rfnmany} Since there is no restriction on $(n,k)\in \mathbb N\times\set{2,3,\dots}$ in Theorems~\ref{thmEJ} and \ref{thmCz}, one might have the feeling that, for a given $n$, the join-width of a join-distributive lattice of length $n$ can be arbitrarily large. This is not so since, up to isomorphism, there are only finitely many join-distributive lattices of length $n$.
\end{remark}

The statement of Remark~\ref{rfnmany} follows from the fact that each join-distributive lattice of length $n$ is dually isomorphic to the lattice of closed sets of a convex geometry on the set $\set{1,\dots,n}$, see P.\,H.~Edelman \cite[Theorem 3.3]{edelman} together with the 
 sixteenth line in the proof of Theorem 1.9 in  K.~Adaricheva, V.A.~Gorbunov and V.I.~Tumanov~\cite{r:adarichevaetal}; see also 
\cite[Lemma 7.4]{czgcoord}, where this is surveyed. The statement also follows, in a different way, from \cite[Corollary 4.4]{czgcoord}.

\begin{remark} Obviously, Proposition~\ref{ourprop} and  Theorem~\ref{thmEJ} imply Theorem~\ref{thmCz} and, similarly, Proposition~\ref{ourprop} and  Theorem~\ref{thmCz} imply Theorem~\ref{thmEJ}. Thus we obtain a  new, combinatorial proof of Theorem~\ref{thmCz} and a new, lattice theoretical proof of Theorem~\ref{thmEJ}.
\end{remark}

\subsection*{Comparison} We can compare Theorems~\ref{thmEJ} and \ref{thmCz}, and the corresponding original approaches, as follows.

In case of Theorem~\ref{thmEJ}, the construction of the lattice $\ejlat\vsigma$ is very simple, and a join-generating subset is also given. 

In case of Theorem~\ref{thmCz}, the elements of the lattice $\czlat\vpi$ are exactly given by their coordinates, the eligible $\vpi$-tuples. Moreover, the meet operation is easy, and we have a satisfactory description of the optimal meet-generating subset 
since it was proved in \cite[Lemma 6.5]{czgcoord} that 
\[\Mir{\!(\czlat\vpi)}=\bigset{\tuple{\pi_{11}(i)-1,\dots, \pi_{1k}(i)-1}: i\in\set{1,\dots, n} }\text.  
\]

\subsection*{Characterization by trajectories}
For a lattice $L$ of finite length, the set 
$ \bigset{[a,b]: a\prec b,\,\, a,b\in L}$ of prime intervals of $L$ will be denoted by $\print L$. For $[a,b],[c,d]\in\print L$, we say that $[a,b]$ and $[c,d]$ are \emph{consecutive} if $\set{a,b,c,d}$ is a covering square, that is, a 4-element cover-preserving boolean sublattice of $L$. The transitive reflexive closure of the consecutiveness relation on $\print L$ is an equivalence, and the blocks of this equivalence relation are called the \emph{trajectories} of $L$; this concept was introduced for some particular semimodular lattices in G.~Cz\'edli and E.\,T.~Schmidt~\cite{czgschtJH}.  For distinct $[a,b],[c,d]\in\print L$, these two prime intervals are \emph{comparable} if either $b\leq c$, or $d\leq a$.
Before formulating the last statement of the paper, it is reasonable to mention that, for any finite lattice $L$, 
\begin{equation}\label{Dlwltcsd}
\text{$L$ is join-distributive if{f} it is semimodular and meet-semidistributive.}
\end{equation}
This follows from K.~Adaricheva, V.A.~Gorbunov and V.I.\ Tumanov~\cite[Theorems 1.7 and 1.9]{r:adarichevaetal};
see also  D.~ Armstrong~\cite[Theorem 2.7]{armstrong} for the present formulation.

\begin{corollary}\label{trajcorol}
For a semimodular lattice $L$,  the following three conditions are equivalent.
\begin{enumeratei}
\item\label{Dlwltcsa} $L$ is join-distributive.
\item\label{Dlwltcsb} $L$ is of finite length, and for every  trajectory $T$ of $L$ and every maximal chain $C$ of $L$, $|\print C\cap T|=1$. 
\item\label{Dlwltcsc} $L$ is of finite length, and no two distinct comparable prime intervals of $L$ belong to the same trajectory.
\end{enumeratei}
\end{corollary}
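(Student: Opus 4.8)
The plan is to prove the cycle $\ref{Dlwltcsa}\Rightarrow\ref{Dlwltcsb}\Rightarrow\ref{Dlwltcsc}\Rightarrow\ref{Dlwltcsa}$, exploiting the descriptions $\ejlat\vsigma$ and $\czlat\vpi$ for the first implication and the characterization \eqref{Dlwltcsd} for the last. First, for $\ref{Dlwltcsa}\Rightarrow\ref{Dlwltcsb}$, recall that a join-distributive lattice has finite length, so by Theorems~\ref{thmEJ} and \ref{thmCz} and Proposition~\ref{ourprop} we may assume $L=\czlat\vpi$ with $\vpi\in S_n^{k-1}$. In this representation a maximal chain has length $n$, and the key observation is that each covering step $\vec x\prec\vec y$ increases exactly one coordinate by $1$; it is natural to say the step has \emph{color} $i$ if the $i$-th coordinate is the one that grows. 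The first task is to show: (a) in any covering square, the two pairs of opposite edges carry the same two distinct colors, so \emph{color is a trajectory invariant}, and (b) along any maximal chain each color appears exactly once (there are $k$ colors available but a maximal chain has length $n$, so this needs the combinatorial structure of eligible tuples, not just counting). Granting (a), a trajectory $T$ has a well-defined color $i$; granting (b), $T$ meets any maximal chain $C$ in at most one prime interval. For the lower bound $|\print C\cap T|\geq 1$ one argues that two prime intervals of the \emph{same} color, on two maximal chains, are always connected by a chain of covering squares — essentially because one can slide between maximal chains through elementary square-moves while keeping the color-$i$ step. I expect part (a) and the ``$\geq 1$'' direction to be the main work here.

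For $\ref{Dlwltcsb}\Rightarrow\ref{Dlwltcsc}$, suppose two distinct comparable prime intervals $[a,b]$, $[c,d]$ with, say, $b\le c$ lay in the same trajectory $T$. Extend $a\prec b\le c\prec d$ to a maximal chain $C$ of $L$; then both $[a,b]$ and $[c,d]$ lie in $\print C\cap T$, contradicting $|\print C\cap T|=1$. This step is short.

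The substantive remaining implication is $\ref{Dlwltcsc}\Rightarrow\ref{Dlwltcsa}$. Since $L$ is already assumed semimodular, by \eqref{Dlwltcsd} it suffices to derive meet-semidistributivity (and to check finite length is given). The strategy is contrapositive: a semimodular lattice of finite length that is \emph{not} meet-semidistributive contains a configuration forcing two distinct comparable prime intervals in one trajectory. Concretely, failure of meet-semidistributivity produces elements $a,b,c$ with $a\wedge b=a\wedge c<a\wedge(b\vee c)$; passing to the interval $[a\wedge b,\,a\vee b\vee c]$ and using semimodularity, one extracts a cover-preserving sublattice in which a prime interval near the bottom and a comparable prime interval higher up are linked by a sequence of covering squares, i.e.\ are trajectory-equivalent — the classical picture is that in a semimodular but not meet-semidistributive lattice some trajectory ``turns back on itself'' or passes through two comparable rungs. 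I would locate the precise source of this configuration in the theory of semimodular lattices (e.g.\ the $S_7$/covering-square analysis used in \cite{czgschtJH} and \cite{czgcoord}), rather than rebuild it. The main obstacle is exactly this step: making the geometric intuition ``non-meet-semidistributivity makes a trajectory hit two comparable prime intervals'' into a clean argument, and I would try to shortcut it by instead proving $\ref{Dlwltcsc}\Rightarrow\ref{Dlwltcsa}$ through the equivalent description, showing that a semimodular lattice satisfying \ref{Dlwltcsc} is forced, trajectory by trajectory, to be isomorphic to some $\czlat\vpi$ — assigning to each trajectory a coordinate index and reading off the permutations from how trajectories cross a fixed maximal chain, then invoking Theorem~\ref{thmCz}.
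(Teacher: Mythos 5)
Your cycle \eqref{Dlwltcsa}$\Rightarrow$\eqref{Dlwltcsb}$\Rightarrow$\eqref{Dlwltcsc}$\Rightarrow$\eqref{Dlwltcsa} is a reasonable skeleton, and your argument for \eqref{Dlwltcsb}$\Rightarrow$\eqref{Dlwltcsc} is exactly the paper's. But both substantive implications have real gaps. For \eqref{Dlwltcsa}$\Rightarrow$\eqref{Dlwltcsb}, your ``key observation'' that in $\czlat\vpi$ every covering step increases exactly one coordinate by $1$ is false: $\czlat\vpi$ is only a \emph{meet}-subsemilattice of $\set{0\prec\cdots\prec n}^k$, so its covers can be long in the ambient product. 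The paper's own example witnesses this: $\tuple{1,1,1}$ (the element labelled $134,\!111$) is a coatom, so $\tuple{1,1,1}\prec\tuple{4,4,4}$ and all three coordinates jump by $3$. The colour you want is not a coordinate index but the single base-set element added in a cover of the Edelman--Jamison/antimatroid representation (this is the content of \eqref{pnthmk} in the paper); with that colour, ``each colour occurs exactly once on a maximal chain'' is immediate since a maximal chain from $\emptyset$ to $\set{1,\dots,n}$ adds each of the $n$ elements once. Even so, your ``$\geq 1$'' half --- that a trajectory meets \emph{every} maximal chain --- is a genuine connectivity statement about sliding between maximal chains through covering squares; the paper does not reprove it but cites \cite[Lemma 3.3]{czgcoord}, and your one-sentence sketch does not establish it.

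For \eqref{Dlwltcsc}$\Rightarrow$\eqref{Dlwltcsa}, your primary plan (contrapositive via failure of meet-semidistributivity) never produces the promised configuration, and your fallback --- reconstructing $\czlat\vpi$ trajectory by trajectory and invoking Theorem~\ref{thmCz} --- is precisely the paper's first argument, but there the entire weight is carried by \cite[Proposition 6.1]{czgcoord}: a semimodular lattice of finite length with no cover-preserving diamond is isomorphic to some $\czlat\vpi$. Note that \eqref{Dlwltcsc} is used only to exclude cover-preserving diamonds; the coordinatization itself is a nontrivial theorem that you neither prove nor cite, so the implication is not actually established in your write-up. The paper also gives a second, more self-contained route you might prefer: embed $L$ cover-preservingly into a finite distributive lattice via Abels' theorem, show the covers of any $x$ are independent atoms of $[x,\upstar x]$, and conclude via Gr\"atzer's independence theorem that $[x,\upstar x]$ is boolean, hence distributive.
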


As an interesting consequence, note that each of \eqref{Dlwltcsb} and  
\eqref{Dlwltcsc} above, together with semimodularity, implies that $L$ is finite.

\begin{proof}[Proof of Corollary~\ref{trajcorol}] 
Since any two comparable prime intervals belong to the set of prime intervals of   an appropriate maximal chain $C$,  \eqref{Dlwltcsb} implies \eqref{Dlwltcsc}. So we have to prove that
 \eqref{Dlwltcsa} $\Rightarrow$ \eqref{Dlwltcsc} and that \eqref{Dlwltcsc} $\Rightarrow$ \eqref{Dlwltcsa}; we give two alternative arguments for each of these two implications. Let $n=\length L$.

Assume \eqref{Dlwltcsa}. Then $L$ is semimodular by \eqref{Dlwltcsd}, and  it contains no cover-preserving diamond by the definition of join-distributivity. Thus G.~Cz\'edli~\cite[Lemma 3.3]{czgcoord} implies \eqref{Dlwltcsb}. 

For a second argument, assume \eqref{Dlwltcsa} again. Let $\psetn$ denote the set of all subsets of $\set{1,\dots,n}$. 
It is known that $L$ is isomorphic to an appropriate join-subsemilattice $\alg F$ of the powerset $\bigl(\psetn;\cup\bigr)$ such that  $\emptyset\in\alg F$ and  each $X\in\alg F\setminus\set\emptyset$ contains an element $a$ with the property $X\setminus\set a\in \alg F$. The structure $\tuple{\set{1,\dots,n}; \alg F}$ is an \emph{antimatroid} on the base set $\set{1,\dots,n}$ (this concept is due to 
R.\,E.~Jamison-Waldner~\cite{jamisonwaldner}), and the existence of an appropriate $\alg F$
follows from P.\,H.~Edelman~\cite[Theorem 3.3]{edelman} and D.~Armstrong~\cite [Lemma 2.5]{armstrong}; see also 
 K.~Adaricheva, V.A.~Gorbunov and V.I.~Tumanov~\cite[Subsection 3.1]{r:adarichevaetal} and G.~Cz\'edli~\cite[Section 7]{czgcoord}. Now, we can assume that $L=\alg F$. We assert that, for any $X,Y\in \alg F$,
\begin{equation}\label{pnthmk}
X\prec Y \quad\text{if{f}}\quad X\subset Y\text{ and }|Y\setminus X|=1\text.
\end{equation}
The ``if'' part is obvious. For the sake of contradiction, suppose $X\prec Y$ and $x$ and $y$ are distinct elements in $Y\setminus X$. Pick a sequence $Y=Y_0\supset Y_1\supset\dots\supset Y_t=\emptyset$ in $\alg F$ such that $|Y_{i-1}\setminus Y_i|=1$ for $i\in\set{1,\dots,t}$. Then there is a $j$ such that $|Y_j\cap\set{x,y}|=1$. This gives the desired contradiction since $X\cup Y_j\in\alg F$ but $X\subset X\cup Y_j\subset Y$.

Armed with \eqref{pnthmk}, assume that $\set{A=B\wedge C, B,C,D=B\cup C}$ is a covering square in $\alg F$. Note that $A$ and $B\cap C$ can be different; however, $A\subseteq B\cap C$. By \eqref{pnthmk}, there exist $u,x\in D$ such that 
$B=D\setminus \set{u}$ and $C=D\setminus \set{x}$. These elements are distinct since $B\neq C$. Hence $x\in B$ and, by $A\subseteq   C$, $x\notin A$. Using \eqref{pnthmk} again, we obtain $A=B\setminus\set x$. We have seen that whenever $[A,B]$ and $[C,D]$ are consecutive prime intervals, then there is a common $x$ such that $A=B\setminus\set x$ and $C=D\setminus\set x$. This implies that for each trajectory $T$ of $\alg F$, there exists an $x_T\in \set{1,\dots,n}$ such that $X=Y\setminus\set {x_T}$ holds for all $[X,Y]\in T$. Clearly, this implies that \eqref{Dlwltcsc} holds for $\alg F$, and also for $L$.

Next, assume \eqref{Dlwltcsc}.  Since any two prime intervals of a cover-preserving diamond would belong to the same trajectory, $L$ contains no such diamond.  Again, there are two ways to conclude \eqref{Dlwltcsa}.   

First, by \cite[Proposition 6.1]{czgcoord}, 
$L$ is isomorphic to $\czlat\vpi$ for some $k$ and $\vpi\in S_n^{k-1}$, and we obtain from Theorem~\ref{thmCz} that \eqref{Dlwltcsa} holds. 

Second, H.~Abels~\cite[Theorem 3.9(a$\Rightarrow$b)]{abels} implies that $L$ is a cover-preserving join-subsemilattice of a finite distributive lattice $D$. Thus if $x\in L\setminus\set 1$, then the interval $[x,\upstar x]_L$ of $L$ is a cover-preserving join-subsemilattice of $D$. Let $a_1,\dots,a_t$ be the covers of $x$ in $L$, that is, the atoms of $[x,\upstar x]_L$. If we had, say, $a_1\leq a_2\vee \dots\vee a_t$, then we would get a contradiction  in $D$ as follows: $a_1=a_1\wedge (a_2\vee\dots \vee a_t)=(a_1\wedge a_2)\vee\dots\vee (a_1\wedge a_t)=x\wedge\dots\wedge x=x$. Thus  $a_1,\dots,a_t$ are independent atoms in $[x,\upstar x]_L$. Therefore, it follows from G.~Gr\"atzer~\cite[Theorem 380]{GGLT} and the semimodularity of $[x,\upstar x]_L$ that the sublattice $S$ generated by $\set{a_1,\dots, a_t}$ in $L$ is  the $2^t$-element boolean lattice. In particular, $\length{S}=t =\length{\bigl([x,\upstar x]_L\bigr)}$ since  $\set{x,\upstar x}\subseteq S\subseteq  [x,\upstar x]_L$. Since the embedding is cover-preserving, the length of the interval $[x,\upstar x]_D$ in $D$ is also $t$. Hence $|\Jir{([x,\upstar x]_D)}|=t$ by   \cite[Corollary 112]{GGLT}, which clearly implies  $|[x,\upstar x]_D|\leq 2^t$. Now from $ [x,\upstar x]_L\subseteq [x,\upstar x]_D$ and $ 2^t=|S|\leq |[x,\upstar x]_L|\leq 
|[x,\upstar x]_D|\leq 2^t$ we conclude $[x,\upstar x]_L=[x,\upstar x]_D$. This  implies that $[x,\upstar x]_L$ is distributive. Thus \eqref{Dlwltcsa} holds.
\end{proof}

\end{document}